\newcommand{\C}{\mathbb{C}}
\newcommand{\F}{\mathbb{F}}
\newcommand{\N}{\mathbb{N}}
\newcommand{\Q}{\mathbb{Q}}
\newcommand{\Z}{\mathbb{Z}}
\newcommand{\A}{\mathscr{A}}
\newcommand{\Fil}{\mathscr{F}}
\newcommand{\M}{\mathscr{M}}
\newtheorem{thm}{Theorem}[section]
\newtheorem{dfn}[thm]{Definition}
\newtheorem{prp}[thm]{Proposition}
\newtheorem{crl}[thm]{Corollary}
\newtheorem{rmk}[thm]{Remark}
\newtheorem{exm}[thm]{Example}
\def\ens#1{\{ #1 \}}
\def\Ens#1{\left\{ \ #1 \ \right\}}
\def\set#1#2{\{ \ #1 \mid #2 \ \}}
\def\Set#1#2{\left\{ \ #1 \ \middle|\ #2 \ \right\}}
\def\m#1{{\rm #1}}
\def\v#1{| #1 |}
\def\n#1{\| #1 \|}
\newcommand{\im}{\m{im}}
\title{Semisimplicity and Reduction of $p$-adic Representations of Topological Monoids}
\author{Tomoki Mihara}
\date{}
\begin{document}

\maketitle
\begin{abstract}
We give a criterion of the semisimplicity of a $p$-adic unitary representation of a topological monoid by the reduction of the associated operator algebra.
\end{abstract}

\tableofcontents

\section{Introduction}
\label{Introduction}

Let $k$ be a local field. The reduction of a unitary representation of a topological monoid $\M$ over $k$ does not preserve the irreducibility. It is because the reduction only reflects the action of the integral model $k^{\circ}[\M] \subset k[\M]$. We verify that the reduction with respect to a larger integral model compatible with the operator norm preserves the simplicity of a left module in Theorem \ref{simplicity}. This is extended to the reduction of an operator algebra associated to a semisimple unitary representation in Theorem \ref{semisimplicity} by the lifting of central idempotents in Corollary \ref{lift 2}, and gives a criterion of the semisimplicity of a $p$-adic unitary representation of a topological module in Theorem \ref{representation}.

This theory is a generalisation of the reduction theory of the spectrum of an operator in \cite{Mih}. The most essential technique of the reduction theory in \cite{Mih} is a repetition of reductions of an operator. A similar technique is also essential in this paper for the calculation of the reduction of an operator algebra with respect to the suitable integral model. We deal with the repetition of reductions in \S \ref{Semisimplicity of a Unitary Representation} and \S \ref{Examples}.

For a profinite group $G$, this theory connects the reduction of unitary representations of $G$ and the reduction of the $p$-adic unitary dual $\check{G}$ of $G$. In particular when $\M = \Z_p$, then its unitary dual is the open unit ball in $\C_p$ centred at $1$ by Amice's theory, and the connection between two reductions corresponds to the compatibility of the reduction and the Fourier transform.

\vspace{0.1in}

We recall basic notions of $p$-adic Banach algebras and $p$-adic unitary representations in \S \ref{Preliminaries}. In order to observe a relation between the semisimplicity and the reduction of Banach modules, we introduce the lifting properties of idempotents and decompositions in \S \ref{Decomposition of Rings}. We apply the results of \S \ref{Decomposition of Rings} to $p$-adic unitary representations of a topological module in \S \ref{Connection to Representation Theory}. Finally we observe the relation between our theory and the $p$-adic unitary dual of the profinite group together with an example on Amice's theory of Fourier transform $\Z_p$ in \S \ref{$p$-adic Unitary Dual}.

\section{Preliminaries}
\label{Preliminaries}

We recall basic notions of $p$-adic Banach algebras and $p$-adic unitary representations. Here ``unitary'' means that the action preserves the integral structure give as the unita ball. In particular a unitary representation of a group is isometric, and then there is no ambiguity. However, we also deal with a unitary operator of a monoid, and it is just submetric in general.

\subsection{Banach Algebra}
\label{Banach Algebra}

Let $A$ be a ring. A ring is assumed to be associative and unital, but not necessarily to be commutative. A ring homomorphism is assumed to be unital. For an $S \subset A$, we denote by $(S,A)' \subset A$ the subring of elements $t$ with $st = ts$ for any $s \in S$. If there is no ambiguity of $A$, then we simply put $S' \coloneqq (S,A)'$. A $c \in A$ is said to be {\it central} in $A$ if $c \in A'$. For a  commutative ring $R$, an {\it $R$-algebra} is a ring $A$ endowed with a ring homomorphism $R \to A$ whose image lies in $A'$.

Let $k$ be a valuation field. We do not assume that the valuation is non-trivial. We always fix a non-Archimedean norm $\v{\cdot} \colon k \to [ 0, \infty )$ associated to the valuation of $k$, and regard $k$ as a topological field with respect to the induced ultrametric. A {\it normed $k$-vector space} is a $k$-vector space $V$ endowed with a non-Archimedean norm $\n{\cdot} \colon V \to [ 0, \infty )$ with $\n{av} = \v{a} \ \n{v}$ for any $(a,v) \in k \times V$. For a normed $k$-vector space $V$, we set $V(1) \coloneqq \set{v \in V}{\n{v} \leq 1}$ and $V(1-) \coloneqq \set{v \in V}{\n{v} < 1}$. In particular, we put $k^{\circ} \coloneqq k(1)$ and $k^{\circ \circ} \coloneqq k(1-)$. Then $V(1-) \subset V(1) \subset V$ are $k^{\circ}$-submodules of $V$. We denote by $\overline{V}$ the quotient $k^{\circ}$-module $V(1)/V(1-)$. Since $k^{\circ \circ} \subset k^{\circ}$ is a unique maximal ideal, $\overline{k}$ is a field. The action of $k^{\circ \circ}$ on $\overline{V}$ is trivial, and hence $\overline{V}$ is a $\overline{k}$-vector space.

Let $k$ be a complete valuation field. A {\it Banach $k$-vector space} is a normed $k$-vector space complete with respect to the ultrametric induced by the norm. For a Banach $k$-vector space $V$, we denote by $\m{End}_k(V)$ the $k$-algebra of $k$-linear endomorphisms of the underlying $k$-vector space of $V$. The strong topology of $\m{End}_k(V)$ is the locally convex topology of pointwise convergence.

A {\it Banach $k$-algebra} is a $k$-algebra $\A$ endowed with a norm $\n{\cdot} \colon \A \to [ 0,\infty )$ satisfying the following:
\begin{itemize}
\item[(i)] The underlying $k$-vector space of $\A$ endowed with $\n{\cdot}$ is is a Banach $k$-vector space. 
\item[(ii)] $\n{ab} \leq \n{a} \ \n{b}$ for any $(a,b) \in \A^2$.
\item[(iii)] $\n{1} \in \Ens{0,1}$.
\item[(iv)] $\n{ca} = \v{c} \ \n{a}$ for any $(c,a) \in k \times \A$.
\end{itemize}
We also denote by $\A$ the underlying Banach $k$-vector space. Then $\A(1-) \subset \A(1) \subset \A$ are $k^{\circ}$-subalgebras, and $\overline{\A}$ is a $\overline{k}$-algebra. For example, for a Banach $k$-vector space $V$, the $k$-subalgebra $\m{B}_k(V) \subset \m{End}_k(V)$ of continuous $k$-linear endomorphisms is a Banach $k$-algebra with respect to the operator norm given in the following way:
\begin{eqnarray*}
  \n{\cdot} \colon \m{B}_k(V) & \to & [ 0, \infty ) \\
  T & \mapsto & \sup_{v \in V(1)} \n{Tv} < \infty.
\end{eqnarray*}
For a Banach $k$-algebra $\A$, a {\it Banach left $\A$-module} is a left $\A$-module $M$ endowed with a complete non-Archimedean norm $\n{\cdot} \colon M \to [ 0,\infty )$ of the underlying $k$-vector space with $\n{am} \leq \n{a} \ \n{m}$ for any $(a,m) \in \A \times M$.

A {\it local field} is a complete discrete valuation field $k$ with finite residue field $\overline{k}$. We denote by $p > 0$ the characteristic of $\overline{k}$.

\subsection{Unitary Representation of a Topological Monoid}
\label{Unitary Representation of a Topological Monoid}

A {\it topological monoid} is a monoid $\M$ endowed with a topology with respect to which the multiplication $\M \times \M \to \M$ is continuous. In this subsection, let $k$ be a complete valuation field, and $\M$ a topological monoid.

\begin{dfn}
A unitary representation of $\M$ over $k$ is a pair $(V,\rho)$ of a Banach $k$-vector space $V$ and a monoid homomorphism $\rho \colon \M \to \m{End}_k(V)$ with respect to the multiplication of $\m{End}_k(V)$ such that $\n{\rho(m)(v)} \leq \n{v}$ for any $(m,v) \in \M \times V$ and the associated action
\begin{eqnarray*}
  \tilde{\rho} \colon \M \times V & \to & V \\
  (m,v) & \mapsto & \rho(m)(v)
\end{eqnarray*}
is continuous. A strictly Cartesian unitary representation of $\M$ over $k$ is a unitary representation $(V,\rho)$ of $\M$ over $k$ with $\n{V} \subset \v{k}$.
\end{dfn}

If $k$ is a complete discrete valuation field or if $V$ is of countable type, then the condition $\n{V} \subset \v{k}$ guarantees the existence of an orthonormal Schauder basis of $V$. This is why we use the term ``strictly Cartesian''.

The multiplicative submonoid $\m{B}_k(V)(1) \subset \m{End}_k(V)$ of submetric $k$-linear endomorphisms is equicontinuous by Banach--Steinhaus theorem (\cite{Sch02} Corollary 6.16), and hence the continuity of the action $\tilde{\rho}$ is equivalent to the continuity of $\rho$ with respect to the strong topology of $\m{End}_k(V)$.

\begin{dfn}
Let $(V,\rho)$ and $(W,\pi)$ be unitary representations of $\M$ over $k$. We say that $(V,\rho)$ is isomorphic to $(W,\pi)$ if there is a homeomorphic $k$-linear isomorphism $V \to W$ preserving the action of $\M$.
\end{dfn}

This relation is an equivalent relation. Beware that we do not assume that the isomorphism $V \to W$ is an isometry, and hence a replacement of the norm by an equivalent norm with respect to which the action of $\M$ is unitary gives an isomorphism. In particular, there is a one-to-one correspondence between the set of isomorphism classes of finite dimensional strictly Cartesian unitary representations of $\M$ over $k$ and the set of isomorphism classes of finite dimensional continuous representations $(V,\rho)$ of $\M$ over $k$ which are unitarisable by a norm $\n{\cdot} \colon V \to k$ with $\n{V} \subset \v{k}$, and hence it can regarded as a subset of the set of isomorphism classes of finite dimensional continuous representations $(V,\rho)$ of $\M$ over $k$. This identification relies on the fact that a Hausdorff locally convex topology of a finite dimensional $k$-vector space is unique and a norm of it is unique up to isomorphisms.

\section{Decomposition of Rings}
\label{Decomposition of Rings}

In this section, let $k$ denote a local field. We deal with the relation between a decomposition of a ring by two-sided ideals and the reduction. A decomposition of a ring is given by a central idempotent. We observe the lifting properties of idempotents first, and after then we prove the compatibility of the semisimplicity and the reduction.

\subsection{Lifting of Idempotents}
\label{Lifting of Idempotents}

For a ring $A$, an $e \in A$ is said to be an {\it idempotent} if $e^2 = e$. We verify lifting properties of idempotents for the reduction of Banach algebras. This is a generalisation of the lifting property of (central) idempotents for the projection $k^{\circ}[G] \to \overline{k}[G]$ for a finite group $G$. Since the commutant is not compatible with the reduction in general, one needs to calculate the commutator to lift a central idempotent.

\begin{prp}
\label{lift 1}
Let $\A$ be a Banach $k$-algebra with $\n{\A} \subset \v{k}$. For any idempotent $\overline{e} \in \overline{\A}$, there is an idempotent $e \in \A(1)$ such that $e + \A(1-) = \overline{e}$.
\end{prp}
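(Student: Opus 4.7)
The plan is to imitate the Newton iteration for the polynomial $x^2 - x$ in the complete ring $\A(1)$. First I would pick any $e_0 \in \A(1)$ with $e_0 + \A(1-) = \overline{e}$; such a lift exists by the very definition of $\overline{\A} = \A(1)/\A(1-)$. Since $\overline{e}^2 = \overline{e}$, the element $\epsilon_0 := e_0^2 - e_0$ lies in $\A(1-)$, so $\n{\epsilon_0} < 1$.

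Next, I would define the recursion $e_{n+1} := 3e_n^2 - 2e_n^3$ and set $\epsilon_n := e_n^2 - e_n$. Because $e_n$ and $\epsilon_n$ commute, the calculation takes place in the commutative subring they generate, where every polynomial in $e_n$ can be reduced via $e_n^2 = e_n + \epsilon_n$. A direct expansion should then produce the two identities
\[
  e_{n+1} - e_n = (1 - 2e_n)\,\epsilon_n, \qquad \epsilon_{n+1} = \epsilon_n^2 \,(4\epsilon_n - 3).
\]
The second identity, combined with the submultiplicativity of $\n{\cdot}$ and $\n{4\epsilon_n - 3} \leq 1$, yields $\n{\epsilon_{n+1}} \leq \n{\epsilon_n}^2$, hence $\n{\epsilon_n} \leq \n{\epsilon_0}^{2^n} \to 0$. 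The first identity then gives $\n{e_{n+1} - e_n} \leq \n{\epsilon_n} \to 0$, so $(e_n)_{n \in \N}$ is a Cauchy sequence in the Banach space $\A$, with all terms in $\A(1)$.

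Setting $e := \lim_{n \to \infty} e_n$, closedness of $\A(1)$ gives $e \in \A(1)$, continuity of multiplication gives $e^2 - e = \lim_n \epsilon_n = 0$, and $e - e_0 = \sum_{n \geq 0}(e_{n+1} - e_n)$ lies in the closed ideal $\A(1-)$, so $e + \A(1-) = \overline{e}$. The only delicate step is the algebraic identity $\epsilon_{n+1} = \epsilon_n^2(4\epsilon_n - 3)$, which is a bookkeeping exercise but the crux of the quadratic convergence; once it is in hand, everything else is a routine completeness argument. Note that the hypothesis $\n{\A} \subset \v{k}$ is not actually exploited in this particular lift, since one only needs $\A(1)$ to be complete and $\A(1-)$ to be a closed ideal, both of which are automatic.
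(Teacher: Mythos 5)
Your proposal is correct and is essentially identical to the paper's own argument: the paper uses the very same iteration $P_{i+1} = -2P_i^3 + 3P_i^2$ and derives the same key identity $P_{i+1}^2 - P_{i+1} = 4(P_i^2-P_i)^3 - 3(P_i^2-P_i)^2$, i.e.\ your $\epsilon_{n+1} = \epsilon_n^2(4\epsilon_n - 3)$, to get quadratic convergence. Your closing observation that the hypothesis $\n{\A} \subset \v{k}$ is not actually used here is also accurate.
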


This is the simplest application of \cite{Mih} Proposition 5.8 for an arbitrary lift $P_0 \in \A(1)$ of $\overline{e} \in \overline{\A}$. Since the proof of Proposition 5.8 for a general $P_0$ is a little complicated, we give a shortened proof for this simple case.

\begin{proof}
If $\overline{e} = 0$, then $e \coloneqq 0$ is a desired idempotent. There it suffices to assume $\overline{e} \neq 0$. Take a lift $P_0 \in \A(1)$ of $\overline{e} \in \overline{\A}$. Since $\overline{e}$ is a non-zero idempotent, we have $\n{A} = 1$ and $\n{A^2 - A} < 1$. We define a sequence $(P_i)_{i \in \N} \in \A(1)^{\N}$ inductively by the recurrence relation $P_{i+1} \coloneqq -2P_i^3 + 3P_i^2$. Then for any $i \in \N$,
\begin{eqnarray*}
  & & P_{i+1} - P_i = -2P_i^3 + 3P_i^2 - P_i = (-2P_i+1)(P_i^2 - P_i) \in \A(1-)
\end{eqnarray*}
and
\begin{eqnarray*}
  & & P_{i+1}^2 - P_{i+1} = (-2P_i^3 + 3P_i^2)^2 - (-2P_i^3 + 3P_i^2) = 4P_i^6 - 12P_i^5 + 9P_i^4 + 2P_i^3 - 3P_i^2 \\
  & = & (P_i^2 - P_i)(4P_i^4 - 8P_i^3 + P_i^2 + 3P_i) = (P_i^2 - P_i)(4(P_i^2 - P_i)^2 - 3 (P_i^2 - P_i)) \\
  & = & 4(P_i^2 - P_i)^3 - 3(P_i^2 - P_i)^2.
\end{eqnarray*}
Therefore
\begin{eqnarray*}
  \n{P_{i+1}^2 - P_{i+1}} \leq \n{P_i^2 - P_i}^2 \leq \cdots \leq \n{P_0^2 - P_0}^{2^{i+1}} = \n{A^2 - A}^{2^{i+1}} \stackrel{i \to \infty}{\longrightarrow} 0
\end{eqnarray*}
and
\begin{eqnarray*}
  \n{P_{i+1} - P_{i+1}} \leq \n{(-2P_i+1)} \ \n{(P_i^2 - P_i)} = \n{P_i^2 - P_i} \stackrel{i \to \infty}{\longrightarrow} 0.
\end{eqnarray*}
Thus $(P_i)_{i \in \N}$ converges to a unique idempotent $e \in \A(1)$ with $e - P_0 \in \A(1-)$ because $\A(1)$ is a closed subset of a complete topological ring $\A$. The relation $e - P_0 \in \A(1-)$ implies $e + \A(1-) = P_0 + \A(1-) = \overline{e}$.
\end{proof}

\begin{crl}
\label{lift 2}
Let $\A$ be a Banach $k$-algebra with $\n{\A} \subset \v{k}$, and $O \subset \A(1)$ a closed $k^{\circ}$-subring. For any central idempotent $\overline{e} \in \overline{\A}$, if $\overline{e}$ lies in the image of $O$, then there is an central idempotent $e \in \A$ such that $e \in O \subset \A(1)$ and $e + \A(1-) = \overline{e}$.
\end{crl}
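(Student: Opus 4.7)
The plan is to apply Proposition \ref{lift 1} to obtain an idempotent in $O$ lifting $\overline{e}$, and then to verify centrality by a $\pi$-divisibility argument on off-diagonal Peirce components.

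First I would choose any lift $P_0 \in O$ of $\overline{e}$, which exists by hypothesis, and apply Proposition \ref{lift 1} to obtain the idempotent $e = \lim_i P_i \in \A(1)$ produced by the iteration $P_{i+1} = -2 P_i^3 + 3 P_i^2$. Each $P_{i+1}$ is a polynomial in $P_i$ with coefficients in $\Z \subset k^{\circ}$, so induction gives $P_i \in O$ for every $i \in \N$; by the closedness of $O$ in $\A$, this yields $e \in O$.

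For centrality, fix $x \in \A(1)$. Since $\overline{e}$ is a central idempotent in $\overline{\A}$, we have $\overline{e x (1-e)} = \overline{x}\,\overline{e}(1-\overline{e}) = 0$, so $ex(1-e) \in \A(1-)$. Because $k$ is a local field and $\n{\A} \subseteq \v{k}$, we may write $\A(1-) = \pi \A(1)$ for a uniformiser $\pi \in k$, so $ex(1-e) = \pi y$ for some $y \in \A(1)$. A direct computation gives $e y (1-e) = \pi^{-1} e \cdot e x (1-e) \cdot (1-e) = \pi^{-1} e x(1-e) = y$, so applying the same reasoning to $y$ in place of $x$ shows $y \in \A(1-)$, and hence $ex(1-e) \in \pi^2 \A(1)$. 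Iterating produces $ex(1-e) \in \bigcap_{n \in \N} \pi^n \A(1) = 0$. A rescaling by elements of $k$ extends this to $e \A (1-e) = 0$, and the symmetric argument yields $(1-e)\A e = 0$. Thus $ex = exe = xe$ for every $x \in \A$, so $e$ is central.

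The main obstacle I anticipate is not the idempotent lift itself but the centrality step. Its heart is the self-similar identity $y = e y(1-e)$: this lets the $\pi$-divisibility estimate recycle indefinitely, forcing $e \A(1-e)$ and $(1-e)\A e$ to vanish through completeness alone, with no need for a Newton-type refinement of $e$ or for the commutant to be well behaved under reduction.
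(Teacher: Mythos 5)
Your proposal is correct and follows essentially the same route as the paper: the lift is obtained by running the integer-coefficient iteration $P_{i+1} = -2P_i^3 + 3P_i^2$ inside the closed subring $O$, and centrality is forced by observing that the Peirce corner $ea(1-e)$ (equivalently $eae - ae$) is reproduced by multiplication by $e$ and $1-e$ while its reduction vanishes by the centrality of $\overline{e}$. The only cosmetic difference is that the paper normalises $eae - ae$ to an element of $\A(1) \setminus \A(1-)$ and derives a one-step contradiction, whereas you phrase the same mechanism as an infinite descent through $\varpi^n \A(1)$; the two are interchangeable given $\n{\A} \subset \v{k}$.
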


Beware that the inclusion $\overline{\A'} \subset \overline{\A}{}'$ is not an equality in general. Therefore the result can not be obtained by simply applying Proposition \ref{lift 1} to the Banach $k$-algebra $\A'$.

\begin{proof}
Taking an $A \in \A(1)$ as an element of $O$ in the beginning of the proof of Proposition \ref{lift 1}, we obtain an idempotent $e \in O$ with $e + \A(1) = \overline{e}$. Let $a \in \A$. Assume $eae \neq ae$. Since $\n{\A} \subset \v{k}$, there is a $c \in k^{\times}$ such that $c(eae - ae) \in \A(1) \backslash \A(1-)$. Put $b \coloneqq c(eae - ae)$. Since $\overline{e}$ is an idempotent, $eb - be \in \A(1-)$. On the other hand, we have $eb = c(eae - eae) = 0$, $be = c(eae - ae) = b$, and hence $eb - be = -b$. This contradicts $b \notin \A(1-)$. Therefore $eae = ae$. Similarly $eae = ea$. Thus $ae = eae = ea$. We conclude that $e$ is central in $\A$.
\end{proof}

\subsection{Reductively Semisimple Banach Algebras}
\label{Reductively Semisimple Banach Algebras}

Let $R$ be a ring. A left $R$-module $M$ is said to be {\it semisimple} if $M$ is the direct sum of simple submodules. $R$ is said to be a {\it semisimple ring} if its left regular module ${}_{R}R$ is semisimple. We remark that $R$ is semisimple if and only if every left $R$-module is semisimple, and if and only if Jacobson radical of $R$ is trivial. $R$ is said to be a {\it simple ring} if $R$ possesses no non-trivial two-sided ideal. An Artinian simple ring is a semisimple ring by Wedderburn's theorem.

Let $R$ be a ring. A {\it semisimple $R$-algebra} (resp.\ {\it simple $R$-algebra}) is an $R$-algebra whose underlying ring is a semisimple ring (resp.\ a simple ring).

We give a criterion of the simplicity of the underlying $k$-algebra of a Banach $k$-algebra by the reduction.

\begin{thm}
\label{simplicity}
Let $\A$ be a Banach $k$-algebra with $\n{\A} = \v{k}$. If $\overline{\A}$ is a finite dimensional simple $\overline{k}$-algebra, then the underlying $k$-algebra $\A$ is a finite dimensional simple $k$-algebra.
\end{thm}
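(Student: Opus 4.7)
My plan would be to first establish that $\A$ is finite dimensional over $k$, and then prove simplicity by transferring a non-zero two-sided ideal to the reduction. For finite dimensionality, I would fix a uniformiser $\pi$ of $k^{\circ}$ and use $\n{\A} = \v{k}$ to show that $\A(1-) = \pi\A(1)$: any $a \in \A(1-)$ satisfies $\n{a} = \v{\pi}^{n}$ for some $n \geq 1$, so $\pi^{-1}a \in \A(1)$. This identifies $\overline{\A}$ with $\A(1)/\pi\A(1)$ and makes $\A(1)$ a $\pi$-adically complete $k^{\circ}$-module. I would then choose lifts $a_{1}, \ldots, a_{n} \in \A(1)$ of a $\overline{k}$-basis of $\overline{\A}$ and run the standard iterative approximation: writing $a = \sum_{i}c_{i}^{(0)}a_{i} + \pi b_{1}$, $b_{1} = \sum_{i}c_{i}^{(1)}a_{i} + \pi b_{2}$, and so on, I would obtain a convergent expansion $a = \sum_{i=1}^{n}c_{i}a_{i}$ with $c_{i} = \sum_{j \geq 0} c_{i}^{(j)}\pi^{j} \in k^{\circ}$. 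This shows $\A(1)$ is finitely generated over $k^{\circ}$, and combined with $\A = k\cdot\A(1)$ (another consequence of $\n{\A} = \v{k}$) it gives $\dim_{k}\A \leq n$.

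For simplicity, I would take a non-zero two-sided ideal $I \subset \A$ and aim to show $1 \in I$. Picking $0 \neq a \in I$, the value $\n{a}$ lies in $\v{k}$, so multiplying by a suitable $c \in k^{\times}$ gives an element $ca \in I \cap \A(1)$ of norm $1$. The image $\overline{I}$ of $I \cap \A(1)$ in $\overline{\A}$ is then a non-zero two-sided ideal (using $\n{\A(1) \cdot \A(1)} \leq 1$), so the simplicity of $\overline{\A}$ forces $\overline{I} = \overline{\A}$. Lifting $1 \in \overline{\A}$ back through the surjection $I \cap \A(1) \twoheadrightarrow \overline{I}$ produces an $e \in I \cap \A(1)$ with $\n{1 - e} < 1$, after which the geometric series $\sum_{n \geq 0}(1 - e)^{n}$ converges in $\A$ to an inverse of $e = 1 - (1-e)$. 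Hence $1 = e^{-1}e \in I$ and $I = \A$.

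I do not anticipate a serious obstacle; both halves reduce to standard completion and geometric-series arguments. The slightly delicate point is the Nakayama-type argument establishing the finite generation of $\A(1)$, where one must ensure that the norm filtration on $\A(1)$ coincides with its $\pi$-adic filtration, which is precisely the content of $\A(1-) = \pi\A(1)$ and is guaranteed by the hypothesis $\n{\A} = \v{k}$ rather than mere inclusion.
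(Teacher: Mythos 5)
Your proof is correct, and while the finite-dimensionality half coincides with the paper's argument (lifting a $\overline{k}$-basis and using $\A(1-) = \varpi\A(1)$ to see that the lifts generate $\A(1)$ over $k^{\circ}$), the simplicity half takes a genuinely different and more elementary route. The paper does not argue with two-sided ideals directly: it first shows that every non-zero cyclic left $\A$-module is faithful (by reducing a hypothetical norm-one element of the kernel and placing it in the Jacobson radical of $\overline{\A}$, which vanishes), deduces that $\A$ is a primitive Artinian ring, and then invokes the Jacobson--Bourbaki density theorem and Schur's lemma to identify $\A$ with $\m{M}_{\dim_D M}(D)$ for a division $k$-algebra $D$. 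You instead take a non-zero two-sided ideal $I$, observe that the image of $I \cap \A(1)$ in $\overline{\A}$ is a non-zero two-sided ideal (non-zero because $\n{\A} \subset \v{k}$ lets you rescale a non-zero element of $I$ to norm exactly $1$), conclude from simplicity of $\overline{\A}$ that it contains $1$, and lift to a unit of $\A$ inside $I$ via the geometric series. Your argument is shorter and uses only completeness and submultiplicativity, but it establishes only the absence of non-trivial two-sided ideals; the paper's route additionally produces the explicit Wedderburn presentation of $\A$ as a matrix algebra over a division algebra and the uniqueness of its simple module. Since the theorem as stated asks only for finite-dimensional simplicity, your argument fully suffices, and indeed the Wedderburn structure of $\A$ could be recovered afterwards from Artinian simplicity if needed. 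The only points worth making explicit in a final write-up are that $\overline{\A} \neq 0$ forces $\n{1} = 1$ (so that $1 - e \in \A(1-)$ makes sense as ``norm strictly less than $1$''), and that the two-sided ideal generated by the image of $I \cap \A(1)$ really is that image itself, which follows from $\n{ab} \leq \n{a}\,\n{b}$ exactly as you indicate.
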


\begin{proof}
Since $\overline{\A}$ is of finite dimension, $\A(1)$ is a free $k^{\circ}$-module of finite rank. Indeed, let $\overline{a}_1, \ldots \overline{a}_n \in \overline{\A}$ be a $\overline{k}$-basis, and take representatives $a_1, \ldots, a_n \in \A(1)$ of them. For a uniformiser $\varpi \in k^{\circ \circ}$, we have $\A(1-) = \A(1) \varpi$ and hence
\begin{eqnarray*}
  & & \A(1) = k^{\circ} a_1 + \cdots + k^{\circ} a_n + \A(1-) = k^{\circ} a_1 + \cdots + k^{\circ} a_n + \A(1) \varpi \\
  & = & k^{\circ} a_1 + \cdots + k^{\circ} a_n + k^{\circ} a_1 \varpi + \cdots + k^{\circ} a_n \varpi + \A(1-) \varpi = k^{\circ} a_1 + \cdots + k^{\circ} a_n + \A(1) \varpi^2\\
  & = & \cdots = k^{\circ} a_1 + \cdots + k^{\circ} a_n + \A(1) \varpi^N
\end{eqnarray*}
for any $N \in \N$. It implies that $k^{\circ} a_1 + \cdots + k^{\circ} a_m \subset \A(1)$ is a dense $k^{\circ}$-submodule. It is the image of the continuous $k^{\circ}$-linear homomorphism $(k^{\circ})^n \to \A(1)$ associated to $a_1, \ldots, a_n$, and is closed because $(k^{\circ})^n$ is compact and $\A(1)$ is Hausdorff. Therefore $\A(1)$ is generated by $a_1, \ldots, a_n$. Since $\A(1)$ is torsion free, $\A(1)$ is a free $k^{\circ}$-module.

Since $\overline{\A}$ is of finite dimension again, it is Artinian. By Wedderburn's theorem, $\overline{\A}$ is isomorphic to a $\overline{k}$-algebra $\m{M}_l(D)$ of matrices over a division $\overline{k}$-algebra $D$, and through an identification $\overline{\A} \cong_{\overline{\A}-\m{Alg}} \m{M}_l(D)$ every simple left $\overline{\A}$-module is isomorphic to the natural representation $D^l$, where $l \coloneqq \sqrt[]{\mathstrut n} \in \N$. Take a representative $\mu$ of the unique isomorphism class of simple left $\overline{\A}$-modules. Since $\overline{\A}$ is semisimple, every left $\overline{\A}$-module is isomorphic to a direct sum of copies of $\mu$.

By $\n{\A} \subset \v{k}$, the norm of $\A$ coincides with the norm associated to the filtration $\A(1) \supsetneq \varpi \A(1) \supsetneq \varpi^2 \A(1)  \supsetneq \cdots \supsetneq \bigcap_{i \in \N} \varpi^i \A(1) = O$. Since $\A(1)$ is a free $k^{\circ}$-module, $\A$ is strictly Cartesian, and hence every closed $k$-vector subspace of $\A$ is strictly closed by \cite{BGR84} 2.4.2.\ Proposition 1. Since $\A$ is of finite dimension, every $k$-vector subspace of $\A$ is closed, and hence strictly closed.

Let $M \neq O$ be a cyclic left $\A$-module, and $\rho_M \colon \A \to \m{End}_k(M)$ the $k$-algebra homomorphism associated with the action of $\A$ on $M$. Let $a \in \ker(\rho_M)$. Assume $a \neq 0$. Since $\n{\A} \subset \v{k}$, there is a $c \in k^{\times}$ such that $ca \in \A(1) \backslash \A(1-)$. Then $ca \in \ker(\rho_M)$. Since $M$ is cyclic, $M$ is isomorphic to the quotient $\A/I$ by a left ideal $I \subsetneq \A$. We identify $M$ with $\A/I$ and we endow $M$ with the quotient seminorm. By the argument above, $I$ is closed and strictly closed in $\A$. It implies that $M$ is a left Banach $\A$-module, and the identification $M \cong_{\m{Ban}\A-\m{Mod}} \A/I$ induces an isomorphism $\overline{M} \cong_{\overline{\A}-\m{Mod}} \overline{\A}/\overline{I}$. Since $ca \in \A(1)$ acts trivially on $M(1)$, so does $ca + \A(1-) \in \overline{\A}$ on $\overline{M}$. On the other hand, $\overline{M}$ is a direct sum of $\mu$, and hence $ca + \A(1-)$ acts trivially on the unique simple left $\overline{\A}$-module $\mu$. Thus $ca + \A(1-)$ is an element of Jacobson radical of $\overline{\A}$, which is trivial by the assumption of the semisimplicity of $\overline{\A}$. It contradicts the fact $ca \in \A(1) \backslash \A(1-)$, and we obtain $a = 0$. Thus $\rho_M$ is injective.

We identify $\A$ as the underlying $k$-algebra. Since $\A$ is of finite dimension, $\A$ is Artinian and hence admits a simple left module. Every simple left module is cyclic, and hence $\A$ is a primitive ring by the argument above. It implies that a simple left $\A$-module $M$ is unique up to isomorphisms, and $\rho_M(\A)$ is strongly dense in the double commutant $\m{End}_{\m{End}_{\rho_M(\A)}(M)}(M)$ by Jacobson--Bourbaki density theorem (\cite{Cri04} D 2.2). Since $M$ is of finite dimension, the density implies $\rho_M(\A) = \m{End}_{\m{End}_{\rho_M(\A)}(M)}(M)$. It follows from Schur's lemma that $D \coloneqq \m{End}_{\rho_M(\A)}(M)$ is a division $k$-algebra, and hence $\m{End}_D(M)$ is isomorphic to the simple $k$-algebra $\m{M}_{\dim_D M}(D)$. We conclude that $\A$ is a simple $k$-algebra.
\end{proof}

We note that the converse of Theorem \ref{simplicity} does not hold. For example, consider the simple $\Q_2$-algebra $\m{M}_2(\Q_2)$ of finite dimension. It admits the operator norm with respect to the natural module $\Q_2^2$ endowed with the norm associated to the canonical basis. It is the norm associated to the integral model $\m{M}_2(\Z_2) \subset \m{M}_2(\Q_2)$ and the $2$-adic filtration $\m{M}_2(\Z_p) \supsetneq 2 \m{M}_2(\Z_2) \supsetneq \cdots \supsetneq \bigcap_{i \in \N} 2^i \m{M}_2(\Z_2) = O$. The reduction of $\m{M}_2(\Q_2)$ with respect to the norm is $\m{M}_2(\F_2)$, and it is surely a simple $\F_2$-algebra. On the other hand, $\m{M}_2(\Q_2)$ admits another equivalent norm. Consider the norm associated to the $2$-adic filtered integral model
\begin{eqnarray*}
  \left(
    \begin{array}{cc}
      \Z_2 & \Z_2 \\
      2\Z_2 & \Z_2
    \end{array}
  \right)
  \supsetneq 2
  \left(
    \begin{array}{cc}
      \Z_2 & \Z_2 \\
      2\Z_2 & \Z_2
    \end{array}
  \right)
  \supsetneq \cdots \supsetneq \bigcap_{i \in \N} 2^i
  \left(
    \begin{array}{cc}
      \Z_2 & \Z_2 \\
      2\Z_2 & \Z_2
    \end{array}
  \right)
  = O.
\end{eqnarray*}
The reduction $R$ of $\m{M}_2(\Q_2)$ with respect to the norm is naturally isomorphic to the $\F_2$-subalgebra of $\m{M}_2(\F_2^2) \cong_{\F_2-\m{Alg}}\m{M}_2(\F_2[X]/\F_2[X](X^2+X))$ spanned by
\begin{eqnarray*}
  e_{11} \coloneqq
  \left(
    \begin{array}{cc}
      1 & 0 \\
      0 & 0 
    \end{array}
  \right),
  e_{12} \coloneqq
  \left(
    \begin{array}{cc}
      0 & X \\
      0 & 0 
    \end{array}
  \right),
  e_{21} \coloneqq
  \left(
    \begin{array}{cc}
      0 & 0 \\
      X+1 & 0 
    \end{array}
  \right),
  e_{22} \coloneqq
  \left(
    \begin{array}{cc}
      0 & 0 \\
      0 & 1 
    \end{array}
  \right).
\end{eqnarray*}
The $\F_2$-vector subspace
\begin{eqnarray*}
  V \coloneqq \F_2
  \left(
    \begin{array}{c}
      X \\
      0
    \end{array}
  \right)
  \oplus \F_2
  \left(
    \begin{array}{c}
      0 \\
      1
    \end{array}
  \right) \subset \left( \F_2[X]/\F_2(X^2+X) \right)^2
\end{eqnarray*}
is stable under the action of $R$, and the matrix representations $T_{11}, T_{12}, T_{21}, T_{22} \in \m{M}_2(\F_2)$ of $e_{11}, e_{12}, e_{21}, e_{22}$ on $V$ with respect to the $\F_2$-basis above are
\begin{eqnarray*}
  T_{11} \coloneqq
  \left(
    \begin{array}{cc}
      1 & 0 \\
      0 & 0 
    \end{array}
  \right),
  T_{12} \coloneqq
  \left(
    \begin{array}{cc}
      0 & 1 \\
      0 & 0 
    \end{array}
  \right),
  T_{21} \coloneqq
  \left(
    \begin{array}{cc}
      0 & 0 \\
      0 & 0 
    \end{array}
  \right),
  T_{22} \coloneqq
  \left(
    \begin{array}{cc}
      0 & 0 \\
      0 & 1 
    \end{array}
  \right).
\end{eqnarray*}
Therefore $V$ is not completely reducible as a left $R$-module. Thus $R$ is not a semisimple $\F_2$-algebra.

\begin{dfn}
Let $F$ be a field. An $F$-algebra $A$ is said to be pro-semisimple if there is a faithful semisimple left $A$-module $M$ such that every simple submodule of $M$ is of finite dimension and the image of $A$ in $\m{End}_F(M)$ is strongly closed with respect to the trivial valuation of $F$ and the trivial norm of $M$.
\end{dfn}

The strong closedness in the definition is equivalent with the weak closedness because $M$ is a direct sum of finite dimensional simple left $A$-modules. It follows from Jacobson--Bourbaki density theorem (\cite{Cri04} D 2.2) that $A$ is isomorphic to the direct product of simple $F$-algebras given as the double commutant $\m{End}_{\m{End}_F(\mu)}(\mu)$ for a representative of each isomorphism class of simple left $A$-submodules of $M$. In particular, presenting the identity as the sequence of the identity of the simple $F$-algebras appearing in the decomposition, we have a system of orthonormal primitive central idempotents of $A$.

\begin{rmk}
If $A$ is of finite dimension, then the pro-semisimplicity is equivalent to the semisimplicity.
\end{rmk}

\begin{rmk}
A pro-semisimple $F$-algebra $A$ is a semisimple $F$-algebra if and only if $A$ is of finite dimension. Indeed, if $A$ is of infinite dimension, its centre $A'$ is a direct product of infinitely many fields of finite dimension over $F$. The spectrum of $A'$ is Stone--$\check{\m{C}}$ech compactification of the discrete space given as the disjoint union of the spectra of the fields, and it possesses a point corresponding to a non-principal ultrafilter of the discrete set. Such a point corresponds to a non-projective maximal ideal of $A'$, and its commutant is a two-sided ideal of $A$ which is not generated by a central idempotent. Thus $A$ is not a semisimple $F$-algebra.
\end{rmk}

We verify the relation between a certain topological semisimplicity of a Banach $k$-algebra $A$ and the pro-semisimplicity of its reduction $\overline{\A}$.

\begin{thm}
\label{topological semisimplicity}
Let $\A$ be a Banach $k$-algebra with $\n{\A} = \v{k}$. If $\overline{\A}$ is a pro-semisimple $\overline{k}$-algebra, then $\A$ admits a canonical dense two-sided ideal $\A_{\circ}$ which is a direct sum of the underlying left $(\A \times \A^{\m{op}})$-modules of simple $\A$-algebras and whose simple components are of finite dimension. Moreover, the decomposition of $\A_{\circ}$ into simple components is derived from a unique decomposition of $\A(1) \cap \A_{\circ}$ into indecomposable projective two-sided ideals.
\end{thm}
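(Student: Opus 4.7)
The natural plan is to lift the decomposition of the pro-semisimple reduction $\overline{\A}$ back to the Banach algebra $\A$. By the structural remark following the definition of pro-semisimplicity, $\overline{\A}$ admits a canonical system $(\overline{e}_i)_{i \in I}$ of orthogonal primitive central idempotents with $\overline{\A}\overline{e}_i$ a finite-dimensional simple $\overline{k}$-algebra and $\overline{\A} \cong \prod_{i \in I} \overline{\A}\overline{e}_i$. The first step is to apply Corollary \ref{lift 2} to each $\overline{e}_i$ with $O \coloneqq \A(1)$, whose image in $\overline{\A}$ is all of $\overline{\A}$; this produces a central idempotent $e_i \in \A(1)$ lifting $\overline{e}_i$. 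Mutual orthogonality of the family $(e_i)_{i \in I}$ is then automatic: for $i \neq j$ the element $e_i e_j$ is a central idempotent with reduction $\overline{e}_i \overline{e}_j = 0$, hence $e_i e_j \in \A(1-)$, and submultiplicativity forces $\n{e_i e_j} = \n{(e_i e_j)^n} \leq \n{e_i e_j}^n \to 0$ as $n \to \infty$, so $e_i e_j = 0$.

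Next, I would set $\A_i \coloneqq e_i \A = \A e_i$; this is a two-sided ideal of $\A$, a Banach $k$-subalgebra with unit $e_i$, and its reduction coincides with the finite-dimensional simple $\overline{k}$-algebra $\overline{\A}\overline{e}_i$. Because $\n{\A_i} \subset \n{\A} = \v{k}$, Theorem \ref{simplicity} applies to $\A_i$ and yields that $\A_i$ is a finite-dimensional simple $k$-algebra. Defining $\A_\circ \coloneqq \bigoplus_{i \in I} \A_i$ as the internal direct sum of underlying left $(\A \times \A^{\m{op}})$-modules produces a two-sided ideal of $\A$ with the prescribed simple components of finite dimension.

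The main obstacle is the density of $\A_\circ$ in $\A$. The strategy would be to use the partial orthogonal sums $e_F \coloneqq \sum_{i \in F} e_i \in \A(1)$ for finite $F \subset I$: each $e_F \A = \bigoplus_{i \in F} \A_i$ already sits inside $\A_\circ$, and one needs to approximate every $a \in \A$ in norm by such finite sums. This reduces, via a Hensel-type iteration modulo successive powers of the uniformizer $\varpi$ together with completeness of $\A(1)$ under the filtration $\A(1) \supsetneq \varpi \A(1) \supsetneq \varpi^2 \A(1) \supsetneq \cdots$, to the density of $\bigoplus_{i \in I} \overline{\A}\overline{e}_i$ inside $\overline{\A} \cong \prod_{i \in I} \overline{\A}\overline{e}_i$ in the strong topology induced by a faithful semisimple $\overline{\A}$-module. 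For the concluding uniqueness assertion, $\A(1) \cap \A_\circ = \bigoplus_{i \in I} \A_i(1)$, and each $\A_i(1)$ is a free $k^\circ$-module of finite rank by the proof of Theorem \ref{simplicity}, a direct summand (hence projective) two-sided ideal of $\A(1) \cap \A_\circ$, and indecomposable since any nontrivial central idempotent of $\A_i(1)$ would reduce to a nontrivial central idempotent of the simple algebra $\overline{\A}\overline{e}_i$; uniqueness of the whole decomposition then reduces to uniqueness of the primitive central idempotent system $(\overline{e}_i)_{i \in I}$ in $\overline{\A}$ together with uniqueness of central lifts in $\A(1)$ with prescribed reduction.
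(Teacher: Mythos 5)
Your construction coincides with the paper's: lift the primitive central idempotents $\overline{e}_i$ of the pro-semisimple reduction to central idempotents $e_i \in \A(1)$ via Corollary \ref{lift 2}, deduce orthogonality from the fact that $e_ie_j$ ($i \neq j$) is an idempotent lying in $\A(1-)$ and hence topologically nilpotent and hence zero (your estimate $\n{e_ie_j} = \n{(e_ie_j)^n} \leq \n{e_ie_j}^n$ is exactly that argument), identify each block $\A e_i$ as a finite-dimensional simple $k$-algebra by applying Theorem \ref{simplicity} to it through the identification of its reduction with $\overline{\A}\,\overline{e}_i$, and set $\A_{\circ} = \bigoplus_i \A e_i$. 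Your treatment of the final uniqueness assertion (indecomposability of $\A(1)e_i$ from primitivity of $\overline{e}_i$, uniqueness of central idempotent lifts with prescribed reduction) is also in line with what the paper intends, and is if anything more explicit than the paper's own one-line conclusion.

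The one step that does not close is the density of $\A_{\circ}$, which you correctly flag as the main obstacle but only sketch. The route you propose --- a $\varpi$-adic Hensel-type iteration reducing the problem to the strong density of $\bigoplus_i \overline{\A}\,\overline{e}_i$ in $\prod_i \overline{\A}\,\overline{e}_i$ --- does not deliver the conclusion: strong density in the reduction only gives coordinatewise (pointwise) approximation of $\overline{a}$, whereas each stage of a $\varpi$-adic induction requires the successive reductions to lie \emph{exactly} in the algebraic direct sum, i.e.\ to be supported on finitely many blocks, and there is no reason for that. What is actually needed is the norm convergence $\bigl(\sum_{i \in F} e_i\bigr)a \to a$ along the directed set of finite subsets $F$, i.e.\ that the net of partial sums of the $e_i$ is an approximate unit for the norm topology of $\A$; this is precisely the assertion the paper makes (without further justification) at this point, and it is where the real content of the density claim sits. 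So your proposal reproduces the paper's argument step for step except that it replaces the paper's approximate-unit assertion by a reduction that does not quite prove it; to complete the proof you would need to establish that approximate-unit property directly.
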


In particular, Theorem states that the reduction respects the semisimplicity in the finite dimensional case.

\begin{proof}
Since $\overline{\A}$ is pro-semisimple, it admits a subset $\overline{E}$ of central idempotents such that $1 = \sum_{\overline{e} \in \overline{E}} \overline{e}$ , $\overline{e} \overline{e}{}' = 0$ for any $(\overline{e},\overline{e}{}') \in (\overline{E}{}')^2$ with $\overline{e} \neq \overline{e}{}'$, and $\overline{\A} \overline{e}$ is a simple ring of finite dimension for any $\overline{e} \in \overline{E}$. Here the sum $\sum_{\overline{e} \in \overline{E}} \overline{e}$ means the limit $\lim_{\overline{S} \in \Fil(\overline{E})} \sum_{\overline{e} \in \overline{S}} \overline{e}$ in the strong topology of $\m{End}_{\overline{k}}({}_{\overline{\A}}\overline{\A})$ along the directed set $\Fil(\overline{E}) \subset 2^{\overline{E}}$ of finite subsets. Now $\overline{E}$ is the set of primitive central idempotents of $\overline{\A}$, and hence is independent of the choice of a faithful semisimple left $\A$-module $M$ in the definition of the pro-semisimplicity. By Corollary \ref{lift 2}, $\A$ admits a subset $E$ of central idempotents such that $1 = \sum_{e \in E} e$, $e \in \A(1)$, and the correspondence $e \mapsto e + \A(1-) \in \overline{\A}$ gives a bijective map $E \to \overline{E}$. Here the sum $\sum_{e \in E} e$ means the strong limit again but not the limit in the norm topology. Moreover, since $(e + \A(1-))(e' + \A(1-)) = 0$ and $e e' = e' e$, we have $e e' = 0$ for any $(e,e') \in E^2$ with $e \neq e'$. Indeed, $e e'$ is an idempotent with $e e' \in \A(1-)$. Every element of $\A(1-)$ is topologically nilpotent, and an idempotent is topologically nilpotent if and only if it is zero.

Thus we have obtained a semisimple two-sided ideal $\A_{\circ} \coloneqq \bigoplus_{e \in E} \A e \subset \A$, and it is dense because the directed system $(\sum_{e \in S} e)_{S \in \Fil(E)}$ of central idempotents along the directed set $\Fil(E) \subset 2^E$ of finite subsets forms an approximate unit. The decomposition of $\A_{\circ}$ is the orthonormal direct sum of normed $k$-vector spaces because it is derived from the system of orthonormal idempotents with norm $1$. This gives a decomposition $\A(1) \cap \A_{\circ} = \bigoplus_{e \in E} \A(1) e$. The reduction of $\A e$ is the simple $\overline{\A}$-algebra $\overline{\A}(e + \A(1-))$ for any $e \in E$. This completes the proof by Theorem \ref{simplicity}.
\end{proof}

We remark that the norm of $\A$ is restored from the canonical dense integral model $\A(1) \cap \A_{\circ}$. Indeed, $\A(1)$ coincides with the $\varpi$-adic completion of $\A(1) \cap \A_{\circ}$ endowed with the $\varpi$-adic norm because $\A(1-) \cap \A_{\circ} = \varpi (\A(1) \cap \A_{\circ})$, where $\varpi \in k^{\circ \circ}$ is a uniformiser.

\begin{crl}
\label{semisimplicity}
Let $\A$ be a Banach $k$-algebra with $\n{\A} = \v{k}$. If $\overline{\A}$ is a finite dimension semisimple $\overline{k}$-algebra, then the underlying $k$-algebra of $\A$ is a finite dimensional semisimple $k$-algebra. Moreover, the decomposition of $\A$ into simple $\A$-algebras is derived from a unique decomposition of $\A(1)$ into indecomposable projective two-sided ideals.
\end{crl}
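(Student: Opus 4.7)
The plan is to reduce the corollary directly to Theorem~\ref{topological semisimplicity} and exploit the finite-dimensionality of $\overline{\A}$ to turn its ``approximate'' output into a genuine direct-sum decomposition.

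First I would invoke the remark that every finite dimensional semisimple $\overline{k}$-algebra is pro-semisimple, so that Theorem~\ref{topological semisimplicity} applies to $\A$. This produces a set $E \subset \A(1)$ of pairwise orthogonal central idempotents lifting the primitive central idempotents of $\overline{\A}$, with $1 = \sum_{e \in E} e$, a dense two-sided ideal
\[
  \A_{\circ} \coloneqq \bigoplus_{e \in E} \A e \subset \A,
\]
and a decomposition $\A(1) \cap \A_{\circ} = \bigoplus_{e \in E} \A(1) e$ into indecomposable projective two-sided ideals whose uniqueness is part of the conclusion of that theorem. Since $\overline{\A}$ is finite dimensional, its set of primitive central idempotents is finite, and hence so is $E$.

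Next, with $E$ finite, the formula $1 = \sum_{e \in E} e$ is a literal finite identity in $\A$, so $1 \in \A_{\circ}$ and therefore $\A_{\circ} = \A$. This upgrades the decomposition of the dense ideal to $\A = \bigoplus_{e \in E} \A e$ and gives the desired $\A(1) = \A(1) \cap \A_{\circ} = \bigoplus_{e \in E} \A(1) e$. Each component $\A e$ is itself a Banach $k$-algebra with unit $e$ of norm $1$ satisfying $\n{\A e} = \v{k}$, whose reduction is canonically the finite dimensional simple $\overline{k}$-algebra $\overline{\A}\,\overline{e}$; applying Theorem~\ref{simplicity} to $\A e$ then shows that $\A e$ is a finite dimensional simple $k$-algebra. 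Summing over the finite set $E$ exhibits $\A$ as a finite dimensional semisimple $k$-algebra with the stated decomposition, whose uniqueness is inherited from Theorem~\ref{topological semisimplicity}.

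The main obstacle, if one can call it that, is really only the step of noting that finite-dimensionality of $\overline{\A}$ collapses the approximate unit of Theorem~\ref{topological semisimplicity} into a literal identity inside $\A_{\circ}$. All the genuine content---lifting central idempotents, decomposing the reduction, and extracting simplicity of each component from its reduction---has already been absorbed into Corollary~\ref{lift 2} and Theorems~\ref{simplicity} and \ref{topological semisimplicity}.
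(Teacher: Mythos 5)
Your proposal is correct and follows essentially the same route as the paper, which states Corollary \ref{semisimplicity} as an immediate consequence of Theorem \ref{topological semisimplicity} (whose proof already invokes Theorem \ref{simplicity} on each component $\A e$). The only detail you add explicitly --- that finite-dimensionality of $\overline{\A}$ makes $E$ finite, turns the approximate unit into the literal identity $1 = \sum_{e \in E} e$, and hence forces $\A_{\circ} = \A$ and $\A(1) = \bigoplus_{e \in E} \A(1) e$ --- is exactly the intended reading.
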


A counterexample of the converse of Corollary \ref{semisimplicity} is given by a unitary representation of a $p$-group. For example, consider the group algebra $\Q_2[\Z/2\Z]$. It is semisimple because $\m{ch}(\Q_2) = 0$. It admits the norm associated to the integral model $\Z_2[\Z/2\Z] \subset \Q_2[\Z/2\Z]$ and the $2$-adic filtration $\Z_2[\Z/2\Z] \supsetneq 2 \Z_2[\Z/2\Z] \supsetneq \cdots \supsetneq \bigcap_{i \in \N} 2^i \Z_2[\Z/2\Z] = O$, and its reduction with respect to the norm is the group algebra $\F_2[\Z/2\Z] \cong_{\F_2-\m{Alg}} \F_2[X]/\F_2[X](X^2+1) \cong_{\F_2-\m{Alg}} \F_2[Y]/\F_2[Y]Y^2$. It is a local commutative ring which is not a field, and hence is not a simple ring. It is remarkable that $\Q_2[\Z/2\Z]$ admits another equivalent norm which gives a semisimple reduction. Put $\sigma \coloneqq [0 + 2\Z] + [1 + 2\Z] \in \Z_2[\Z/2\Z]$. Consider the norm associated to the integral model $\Z_2[\Z/2\Z][2^{-1}\sigma] \subset \Q_2[\Z/2\Z]$ and the $2$-adic filtration $\Z_2[\Z/2\Z][2^{-1}\sigma] \supsetneq 2 \Z_2[\Z/2\Z][2^{-1}\sigma] \supsetneq \cdots \supsetneq \bigcap_{i \in \N} 2^i \Z_2[\Z/2\Z][2^{-1}\sigma] = O$. This is the operator norm with respect to the regular representation identified with the orthogonal direct sum of the two characters $\Z/2\Z \to \Q_2^{\times} \colon [1 + 2\Z] \mapsto \pm 1$. The reduction of $\Q_2[\Z/2\Z]$ with respect to the norm is $\F_2^2$, and this is a semisimple $\F_2$-algebra.

\section{Connection to Representation Theory}
\label{Connection to Representation Theory}

We continue to assume that the base field $k$ is a local field. We apply the results in \S \ref{Reductively Semisimple Banach Algebras} to the operator algebra $\A$ associated to a unitary representation of a topological monoid $\M$. As we referred in \S \ref{Introduction}, the integral model $\A(1)$ of $\A$ possesses enough operators unlike the image of the integral model $k^{\circ}[\M] \subset k[\M]$ so that the reduction respects the semisimplicity of the natural left module. There is a problem that it is a little difficult to determine the structure of the operator algebra $\A$ and hence the semisimplicity of the reductive operator algebra $\overline{\A}$ in a direct way. We establish a way to calculate $\overline{\A}$ without determining $\A$ by a repetitive reduction method. This algorithm might contain infinitely many steps in general, but when we deal with a finite dimensional representation, then the algorithm stops in finite steps.

\subsection{Semisimplicity of a Unitary Representation}
\label{Semisimplicity of a Unitary Representation}

We apply the result of \S \ref{Reductively Semisimple Banach Algebras} to an operator algebra associated to a unitary representation of a topological monoid $\M$. This gives a criterion of the semisimplicity of the representation. The reduction of a representation itself does not preserve the semisimplicity. The unit ball of the operator algebra is larger than the image of the integral model $k^{\circ}[\M]$ of $k[\M]$, and its reduction possesses enough operators for the semisimplicity of the natural left module to be preserved, while the image of $\overline{k}[\M]$ does not.

\begin{thm}
\label{representation}
Let $\M$ be a topological monoid, and $(V,\rho)$ a finite dimensional strictly Cartesian unitary representation of $\M$ over $k$. Denote by $\A$ the closure of the image of $k[\M]$ by the $k$-linear extension of $\rho$ in the Banach $k$-algebra $\m{B}_k(V)$ of continuous $k$-linear endomorphisms endowed with the operator norm. If $\overline{\A}$ is a semisimple $\overline{k}$-algebra, then $(V,\rho)$ is a semisimple representation of $\M$.
\end{thm}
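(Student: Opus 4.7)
The plan is to apply Corollary \ref{semisimplicity} to the operator algebra $\A$ itself, and then translate semisimplicity of $\A$ as an abstract $k$-algebra into semisimplicity of $(V,\rho)$ as an $\M$-representation. For the first step I need to verify $\n{\A} = \v{k}$ and that $\overline{\A}$ is finite dimensional over $\overline{k}$. Since $(V,\rho)$ is finite dimensional and strictly Cartesian, $V$ admits a finite orthonormal Schauder basis, with respect to which the operator norm of $T \in \m{B}_k(V)$ is the maximum of the absolute values of its matrix entries; in particular $\n{\m{B}_k(V)} \subset \v{k}$, and together with $k \cdot \m{id}_V \subset \A$ this yields $\n{\A} = \v{k}$. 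Moreover $\m{B}_k(V)$ is a finite dimensional $k$-algebra with $\overline{\m{B}_k(V)} \cong \m{M}_{\dim_k V}(\overline{k})$, and since $\A \subset \m{B}_k(V)$ is a closed $k$-subspace, $\overline{\A}$ injects into $\overline{\m{B}_k(V)}$ and is therefore finite dimensional.

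Granting the hypothesis that $\overline{\A}$ is semisimple, Corollary \ref{semisimplicity} now gives that $\A$ is a finite dimensional semisimple $k$-algebra, so the finite dimensional left $\A$-module $V$ decomposes as $V = \bigoplus_{i=1}^{r} V_i$ with each $V_i$ a simple left $\A$-submodule.

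Finally, I would verify that the left $\A$-submodules of $V$ coincide with the $\M$-invariant $k$-subspaces, so that the decomposition above is automatically a decomposition of $\M$-representations into irreducible subrepresentations. One direction is immediate from $\rho(\M) \subset \A$. For the converse, an $\M$-stable subspace $W \subset V$ is stable under $\rho(k[\M])$ by $k$-linearity and is closed because $V$ is finite dimensional, so by continuity of the evaluation action $\m{B}_k(V) \times V \to V$ together with the density of $\rho(k[\M])$ in $\A$, it is $\A$-stable. Hence each $V_i$ is a simple $\M$-subrepresentation and $(V,\rho)$ is semisimple. The main obstacle I expect is the bookkeeping in the first paragraph that verifies the hypotheses of Corollary \ref{semisimplicity}, which genuinely relies on strict Cartesianness of $V$; once those hypotheses are in place, the passage from semisimplicity of the $k$-algebra $\A$ to semisimplicity of $(V,\rho)$ as an $\M$-representation is routine.
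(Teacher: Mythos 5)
Your proposal is correct and follows essentially the same route as the paper: apply Corollary \ref{semisimplicity} to conclude that $\A$ is a finite dimensional semisimple $k$-algebra, decompose $V$ into simple left $\A$-submodules, and then use the density of $\rho(k[\M])$ in $\A$ together with the closedness of finite dimensional subspaces to identify $\M$-subrepresentations with $\A$-submodules. Your explicit verification of the hypothesis $\n{\A} = \v{k}$ via the orthonormal basis and $k \cdot \m{id}_V \subset \A$ is a detail the paper leaves implicit, and it is a welcome addition.
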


\begin{proof}
Since $V$ is of finite dimension, so is $\overline{\A}$. Therefore $\A$ is a finite dimensional semisimple $k$-algebra by Corollary \ref{semisimplicity}, and $\A$ admits central idempotents $e_1, \ldots, e_m \in \A(1)$ such that $1 = e_1 + \cdots + e_m$, $e_i e_j = 0$ for any $(i,j) \in \ens{1, \ldots, m}$ with $i \neq j$, and $\A e_i$ is an Artinian simple $k$-algebra with precisely one isomorphism class of simple left modules for any $i \in \ens{1, \ldots, m}$. Then $V$ decomposes into simple left $\A$-submodules. Let $W \subset V$ be a simple left $\A$-submodule. Since $\A$ contains the image of $k[\M]$, $W$ is a left $k[\M]$-submodule. We denote by $\rho_W \colon \M \to \m{End}_k(W)$ the restriction of $\rho$ on $W$, and verify that $(W,\rho_W)$ is a irreducible representation of $\M$ over $k$. Let $W' \subset W$ be a subrepresentation of $\M$. Then $W'$ is a left $k[\M]$-submodule of $W$, and $aw$ is contained in the closure of $W'$ for any $(a,w) \in \A \times W'$. On the other hand, $W' \subset W$ is closed because $W$ is of finite dimension, and hence $W'$ is a left $\A$-submodule of $W$. Since $W$ is a simple left $\A$-module, we obtain $W' = O$ or $W' = W$. Thus $W$ is a irreducible representation of $\M$. We conclude that $V$ decomposes into irreducible subrepresentations of $\M$.
\end{proof}

Theorem \ref{representation} is a partial generalisation of \cite{Mih} Theorem 5.7. A representation of a single operator corresponds to a representation of the discrete Abelian monoid $\N$, and \cite{Mih} Theorem 5.7 gives a criterion of the semisimplicity of the corresponding representation by the repetition of finitely many reductions. The reason why we considered a reduction only once in Theorem \ref{representation} is because we deal with the case when $\overline{\A}$ is known. The repetition of reductions corresponds to the repetitive calculation necessary to determine the reductive operator algebra $\overline{\A}$. The following explains the correspondence.

Let $\M$ be a topological monoid, and $(V,\rho)$ a strictly Cartesian unitary representation of $\M$ over $k$. Since the valuation of $k$ is discrete and $\n{V} \subset \v{k}$, the operator norm of $\m{B}_k(V)$ coincides with the supremum norm of coefficients of the matrix presentation with respect to an orthonormal Schauder basis, and $\overline{\m{B}_k(V)}$ is naturally isomorphic to $\m{End}_{\overline{k}}(\overline{V})$. Denote by $\Pi \colon \m{B}_k(V)(1) \twoheadrightarrow \m{End}_{\overline{k}}(\overline{V})$ the canonical projection. Take a uniformiser $\varpi \in k^{\circ \circ}$. Set $A_0 \coloneqq k^{\circ}[\M] \subset k[\M]$. Remark that the image of $A_0$ by the $k$-linear extension $k[\rho]$ of $\rho$ is contained in $\m{B}_k(V)(1)$. We define $A_i \subset k[\rho]^{-1}(\m{B}_k(V)(1))$ in an inductive way on $i \in \N$. Suppose that $A_i \subset k[\rho]^{-1}(\m{B}_k(V)(1))$ is defined for an $i \in \N$. Then we set $A_{i+1} \coloneqq A_i + \varpi^{-1} \ker(\Pi \circ k[\rho]|_{A_i}) \subset k[\rho]^{-1}(\m{B}_k(V)(1))$. For each $i \in \N$, we put $\alpha_i \coloneqq \im(\Pi \circ k[\rho]|_{A_i}) \subset \m{End}_{\overline{k}}(\overline{V})$.

\begin{prp}
The reduction $\overline{\A}$ of the closure $\A$ of $\im(k[\rho]) \subset \m{B}_k(V)$ coincides with $\bigcup_{i \in \N} \alpha_i$.
\end{prp}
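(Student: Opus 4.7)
The plan is to prove the two set inclusions $\bigcup_{i \in \N} \alpha_i \subset \overline{\A}$ and $\overline{\A} \subset \bigcup_{i \in \N} \alpha_i$ separately. Two preliminary facts will be used throughout: since $V$ is strictly Cartesian and the valuation of $k$ is discrete, one has $\m{B}_k(V)(1-) = \varpi \m{B}_k(V)(1)$, and $(A_i)_{i \in \N}$ is increasing by construction, hence so is $(\alpha_i)_{i \in \N}$.

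For $\bigcup_{i \in \N} \alpha_i \subset \overline{\A}$, I would prove by induction on $i$ that $k[\rho](A_i) \subset \A(1)$, whence $\alpha_i = \Pi(k[\rho](A_i)) \subset \Pi(\A(1)) = \overline{\A}$. The base case is immediate from $k[\rho](k^{\circ}[\M]) \subset \im(k[\rho]) \cap \m{B}_k(V)(1) \subset \A(1)$. For the inductive step, an element of $\ker(\Pi \circ k[\rho]|_{A_i})$ maps under $k[\rho]$ into $\A(1) \cap \m{B}_k(V)(1-) = \varpi \A(1)$, so its product with $\varpi^{-1}$ remains in $\A(1)$, giving $k[\rho](A_{i+1}) \subset \A(1)$.

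The reverse inclusion rests on the following denominator lemma, which I would prove by induction on $j$: for any $y_0 \in k^{\circ}[\M]$ with $k[\rho](y_0) \in \varpi^j \m{B}_k(V)(1)$, one has $\varpi^{-j} y_0 \in A_j$. In the inductive step, the hypothesis together with $\varpi^j \m{B}_k(V)(1) \subset \varpi^{j-1} \m{B}_k(V)(1)$ and the inductive hypothesis give $\varpi^{-(j-1)} y_0 \in A_{j-1}$, while $k[\rho](\varpi^{-(j-1)} y_0) \in \varpi \m{B}_k(V)(1) \subset \m{B}_k(V)(1-)$ places $\varpi^{-(j-1)} y_0$ in $\ker(\Pi \circ k[\rho]|_{A_{j-1}})$; the defining recurrence then yields $\varpi^{-j} y_0 \in A_j$. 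To conclude, for $\bar{a} \in \overline{\A}$ I would lift to $a \in \A(1)$ and, using that $\A$ is the closure of $\im(k[\rho])$, pick $y \in k[\M]$ with $\n{a - k[\rho](y)} \leq \v{\varpi}$, so that $k[\rho](y) \in \m{B}_k(V)(1)$ and $\Pi(a) = \Pi(k[\rho](y))$; applying the lemma to $y_0 \coloneqq \varpi^r y$ for any $r \geq 0$ with $\varpi^r y \in k^{\circ}[\M]$ gives $y \in A_r$, and therefore $\bar{a} \in \alpha_r$.

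The main difficulty I anticipate is identifying the correct inductive invariant for the denominator lemma: each step of the recursion defining $A_i$ permits exactly one additional factor of $\varpi^{-1}$, so the proof must match the denominator of the coefficients of $y$ to the recursion index. Once this correspondence is pinned down, the remaining verifications are routine manipulations with the reduction map $\Pi$.
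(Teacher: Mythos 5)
Your proof is correct and follows essentially the same route as the paper: your denominator lemma is exactly the paper's inline observation that $\varpi^{h+1-i}f \in \ker(\Pi \circ k[\rho]|_{A_i})$ for $i = 0, \ldots, h$ (where $\varpi^{h+1}f \in k^{\circ}[\M]$), so that $f \in A_{h+1}$, and your concluding density step matches the paper's identification $\overline{\A} = \im\left(\Pi \circ k[\rho]|_{k[\rho]^{-1}(\m{B}_k(V)(1))}\right)$. Your explicit inductive verification of the easy inclusion $\bigcup_{i \in \N} \alpha_i \subset \overline{\A}$ is slightly more careful than the paper, which absorbs that containment into the inductive definition $A_{i+1} \subset k[\rho]^{-1}(\m{B}_k(V)(1))$.
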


\begin{proof}
Let $f \in k[\rho]^{-1}(\m{B}_k(V)(1))$. If $f \in k^{\circ}[\M] = A_0$, then $f \in \bigcup_{i \in \N} A_0$. Otherwise, take an $h \in \N$ such that $\varpi^{h+1} f \in k^{\circ}[\M] \backslash \varpi k^{\circ}[\M]$, where $\varpi \in k^{\circ \circ}$ is a uniformiser. Then $\varpi^{h+1-i}f \in \ker(\Pi \circ k[\rho]|_{A_i})$ for any $i \in \ens{0, \ldots, h}$ and $f \in A_{h+1} \subset \bigcup_{i \in \N} A_0$. Thus $k[\rho]^{-1}(\m{B}_k(V)(1)) = \bigcup_{i \in \N} A_0$. We conclude
\begin{eqnarray*}
  & & \overline{\A} = \im \left( \Pi \circ k[\rho]|_{k[\rho]^{-1}(\m{B}_k(V)(1))} \right) = \im \left( \Pi \circ k[\rho]|_{\bigcup_{i \in \N} A_i} \right) = \bigcup_{i \in \N} \im \left( \Pi \circ k[\rho]|_{A_i} \right) \\
  & = & \bigcup_{i \in \N} \alpha_i.
\end{eqnarray*}
\end{proof}

\begin{crl}
\label{repetition}
Suppose that there is an $n \in \N$ such that the increasing sequence
\begin{eqnarray*}
  \sum_{m \in \M} \overline{k} \overline{\rho}(m) = \alpha_0 \subset \alpha_1 \subset \alpha_2 \subset \cdots
\end{eqnarray*}
of $\overline{k}$-vector subspaces of $\m{End}_{\overline{k}}(\overline{V})$ satisfies $\alpha_n = \alpha_{n+1} = \cdots$. Then the reduction $\overline{\A}$ of the closure $\A$ of $\im(k[\rho]) \subset \m{B}_k(V)$ coincides with $\alpha_n$.
\end{crl}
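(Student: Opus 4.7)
The plan is essentially to read this off from the preceding Proposition, so the ``proof'' is little more than a one-line set-theoretic manipulation; the only care needed is to confirm that the hypothesis really does let us replace the (possibly infinite) union by a single term.

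First I would invoke the preceding Proposition to get the identity $\overline{\A} = \bigcup_{i \in \N} \alpha_i$ inside $\m{End}_{\overline{k}}(\overline{V})$. This is the substantive input: it identifies the reduction of the topologically generated operator algebra with the union of the $\overline{k}$-vector spaces obtained by the inductive denominator-clearing procedure that defines $A_i$ and then takes images under $\Pi \circ k[\rho]$.

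Next I would observe that the family $(\alpha_i)_{i \in \N}$ is increasing by construction, since $A_i \subset A_{i+1}$ directly from the definition $A_{i+1} = A_i + \varpi^{-1} \ker(\Pi \circ k[\rho]|_{A_i})$, and passing to images under $\Pi \circ k[\rho]$ preserves inclusions. Consequently, for any $n \in \N$,
\begin{eqnarray*}
  \bigcup_{i \in \N} \alpha_i = \bigcup_{i \geq n} \alpha_i.
\end{eqnarray*}
Under the stabilisation hypothesis $\alpha_n = \alpha_{n+1} = \cdots$, every $\alpha_i$ with $i \geq n$ equals $\alpha_n$, so the right-hand union collapses to $\alpha_n$. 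Combining this with the identity from the preceding Proposition gives $\overline{\A} = \alpha_n$, which is the desired conclusion.

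There is no genuine obstacle here, since both ingredients---the identification $\overline{\A} = \bigcup_i \alpha_i$ and the triviality that an eventually constant increasing union equals its stabilised value---are either already proved or immediate. If anything, the only point one might flag is that the ambient space $\m{End}_{\overline{k}}(\overline{V})$ is finite dimensional when $V$ is (via $\overline{\m{B}_k(V)} \cong \m{End}_{\overline{k}}(\overline{V})$), so the stabilisation hypothesis is automatic in the finite dimensional case; but this is merely a remark, not part of the proof of the stated corollary.
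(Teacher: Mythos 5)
Your proof is correct and matches the paper's (implicit) intent exactly: the paper states Corollary \ref{repetition} without a separate proof precisely because it follows from the preceding Proposition's identity $\overline{\A} = \bigcup_{i \in \N} \alpha_i$ together with the monotonicity and stabilisation of the sequence $(\alpha_i)_{i \in \N}$, which is the argument you give.
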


In particular, the stability condition holds for a finite dimensional representation. Thus $\overline{\A}$ can be calculated in finite steps with the reductions. These correspond to the repetition of the reductions in \cite{Mih} Theorem 5.7 and Theorem 5.20. We will compute $\overline{\A}$ for several basic examples in \S \ref{Examples}.

\subsection{Examples}
\label{Examples}

We give several basic examples of the calculation of the reduction $\overline{\A}$ of the operator algebra $\A$ associated to a unitary representation in the way with the repetitive reduction method we considered in \S \ref{Semisimplicity of a Unitary Representation}.

\begin{exm}
Consider the strictly Cartesian unitary representation
\begin{eqnarray*}
  \rho \colon \Z_2 & \to & \m{End}_{\Q_2}(\Q_2^2) \cong_{\Q_2-\m{Alg}} \m{M}_2(\Q_2) \\
  a & \mapsto &
  \left(
    \begin{array}{cc}
      1 & a \\
      0 & 1
    \end{array}
  \right)
\end{eqnarray*}
of the topological Abelian monoid $\Z_2$ over $\Q_2$, where $\Q_2^2$ is endowed with the norm associated to the canonical basis. Then $(\Q_2^2,\rho)$ is not semisimple. The closed $\Q_2$-algebra $\A \coloneqq \Q_2[\rho](\Q_2[\Z_2])$ can be easily computed as
\begin{eqnarray*}
  \A = \Q_2
  \left(
    \begin{array}{cc}
      1 & 0 \\
      0 & 1
    \end{array}
  \right)
  \oplus \Q_2
  \left(
    \begin{array}{cc}
      0 & 1 \\
      0 & 0
    \end{array}
  \right),
\end{eqnarray*}
and this is isomorphic to $\Q_2[X]/\Q_2[X]X^2$, which is not a semisimple ring. Its reduction is isomorphic to $\F_2[X]/\F_2[X]X^2$, and it is not a semisimple ring, either.
\end{exm}

\begin{exm}
Let $\M$ be the free product $\Z_2 * \Z_2$ of the copies of the underlying group of $\Z_2$. We denote by $\iota_1$ (resp.\ $\iota_2$) the embedding $\Z_2 \hookrightarrow \M$ as the first (resp.\ second) component. Consider the strictly Cartesian unitary representation
\begin{eqnarray*}
  \rho \colon \M & \to & \m{End}_{\Q_2}(\Q_2^2) \cong_{\Q_2-\m{Alg}} \m{M}_2(\Q_2) \\
  \iota_1(a) & \mapsto &
  \left(
    \begin{array}{cc}
      1 & a \\
      0 & 1
    \end{array}
  \right) \\
  \iota_2(a) & \mapsto &
  \left(
    \begin{array}{cc}
      1 & 0 \\
      a & 1
    \end{array}
  \right)
\end{eqnarray*}
of the discrete monoid $\M$ over $\Q_2$, where $\Q_2^2$ is endowed with the norm associated to the canonical basis. Then $(\Q_2^2,\rho)$ is irreducible. By the simplicity of $\Q_2^2$ as a left $\Q_2[\M]$-module, the closed $\Q_2$-algebra $\A \coloneqq \Q_2[\rho](\Q_2[\M])$ is the full matrix algebra $\m{M}_2(\Q_2)$ by Jacobson--Bourbaki density theorem (\cite{Cri04} D 2.2). This fact guarantees that $\overline{\A}$ is the full matrix algebra $\m{M}_2(\F_2)$, which is a simple $\F_2$-algebra. Indeed, the reduction $\overline{\rho}$ of $\rho$ is given as
\begin{eqnarray*}
  \overline{\rho} \colon \Z_2 * \Z_2 & \to & \m{End}_{\F_2}(\F_2^2) \cong_{\F_2-\m{Alg}} \m{M}_2(\F_2) \\
  \iota_1(a) & \mapsto &
  \left(
    \begin{array}{cc}
      1 & a + 2\Z_2 \\
      0 & 1
    \end{array}
  \right) \\
  \iota_2(a) & \mapsto &
  \left(
    \begin{array}{cc}
      1 & 0 \\
      a + 2\Z_2 & 1
    \end{array}
  \right),
\end{eqnarray*}
and hence
\begin{eqnarray*}
  & & \alpha_0 \supset \F_2 + \sum_{i = 1,2} \F_2 (\Pi \circ \F_2[\overline{\rho}])([\iota_i(1)] - 1) + \F_2 (\Pi \circ \F_2[\overline{\rho}])(([\iota_1(1)] - 1)([\iota_2(1)] - 1)) \\
  & = &
  \F_2
  \left(
    \begin{array}{cc}
      1 & 0 \\
      0 & 1
    \end{array}
  \right)
  \oplus \F_2
  \left(
    \begin{array}{cc}
      0 & 1 \\
      0 & 0
    \end{array}
  \right)
  \oplus \F_2
  \left(
    \begin{array}{cc}
      0 & 0 \\
      1 & 0
    \end{array}
  \right)
  \oplus \F_2
  \left(
    \begin{array}{cc}
      1 & 0 \\
      0 & 0
    \end{array}
  \right) \\
  & = & \m{M}_2(\F_2).
\end{eqnarray*}
Thus $\overline{\A} = \m{M}_2(\F_2)$.
\end{exm}

\begin{exm}
Let $\M, \iota_1, \iota_2$ be as above. Consider the strictly Cartesian unitary representation
\begin{eqnarray*}
  \rho \colon \M & \to & \m{End}_{\Q_2}(\Q_2^2) \cong_{\Q_2-\m{Alg}} \m{M}_2(\Q_2) \\
  \iota_1(a) & \mapsto &
  \left(
    \begin{array}{cc}
      1 & a \\
      0 & 1
    \end{array}
  \right) \\
  \iota_2(a) & \mapsto &
  \left(
    \begin{array}{cc}
      1 & 0 \\
      2a & 1
    \end{array}
  \right)
\end{eqnarray*}
of the discrete monoid $\M$ over $\Q_2$, where $\Q_2^2$ is endowed with the norm associated to the canonical basis. Then $(\Q_2^2,\rho)$ is irreducible. By the simplicity of $\Q_2^2$ as a left $\Q_2[\M]$-module, the closed $\Q_2$-algebra $\A \coloneqq \Q_2[\rho](\Q_2[\M])$ is the full matrix algebra $\m{M}_2(\Q_2)$ by Jacobson--Bourbaki density theorem (\cite{Cri04} D 2.2). This fact guarantees that $\overline{\A}$ is the full matrix algebra $\m{M}_2(\F_2)$, which is a simple $\F_2$-algebra. We calculate $\overline{\A}$ without use of the irreducibility in the way in Corollary \ref{repetition}. The reduction $\overline{\rho}$ of $\rho$ is given as
\begin{eqnarray*}
  \overline{\rho} \colon \Z_2 * \Z_2 & \to & \m{End}_{\F_2}(\F_2^2) \cong_{\F_2-\m{Alg}} \m{M}_2(\F_2) \\
  \iota_1(a) & \mapsto &
  \left(
    \begin{array}{cc}
      1 & a + 2\Z_2 \\
      0 & 1
    \end{array}
  \right) \\
  \iota_2(a) & \mapsto &
  \left(
    \begin{array}{cc}
      1 & 0 \\
      0 & 1
    \end{array}
  \right),
\end{eqnarray*}
and hence
\begin{eqnarray*}
  \alpha_0 = \F_2
  \left(
    \begin{array}{cc}
      1 & 0 \\
      0 & 1
    \end{array}
  \right)
  \oplus \F_2
  \left(
    \begin{array}{cc}
      0 & 1 \\
      0 & 0
    \end{array}
  \right).
\end{eqnarray*}
Moreover, $\ker(\Pi \circ \Q_2[\rho]|_{\A_0})$ contains $[\iota_2(1)] - 1$ and $([\iota_2(1)] - 1)([\iota_1(1)] - 1)$, and hence $\Z_2[\Z_2 * \Z_2] + \Z_2(2^{-1}([\iota_2(1)] - 1)) + \Z_2 2^{-1}([\iota_2(1)] - 1)([\iota_1(1)] - 1) \subset A_1$. It implies that $\alpha_1$ contains the $\F_2$-vector subspace
\begin{eqnarray*}
  & & \alpha_0 + \F_2 (\Pi \circ \Q_2[\rho]|_{A_0}) \left( \frac{[\iota_2(1)] - 1}{2} \right) + \F_2 (\Pi \circ \Q_2[\rho]|_{A_0}) \left( \frac{([\iota_2(1)] - 1)([\iota_1(1)] - 1)}{2} \right) \\
  & = & \alpha_0 + \F_2
  \left(
    \begin{array}{cc}
      0 & 0 \\
      1 & 0
    \end{array}
  \right)
  +
  \F_2
  \left(
    \begin{array}{cc}
      1 & 0 \\
      0 & 0
    \end{array}
  \right) \\
  & = & \F_2
  \left(
    \begin{array}{cc}
      1 & 0 \\
      0 & 0
    \end{array}
  \right)
  \oplus \F_2
  \left(
    \begin{array}{cc}
      0 & 0 \\
      0 & 1
    \end{array}
  \right)
  \oplus \F_2
  \left(
    \begin{array}{cc}
      0 & 1 \\
      0 & 0
    \end{array}
  \right) \oplus \F_2
  \left(
    \begin{array}{cc}
      0 & 0 \\
      1 & 0
    \end{array}
  \right) = \m{M}_2(\F_2).
\end{eqnarray*}
Thus we have succeeded in computing $\overline{\A} = \m{M}_2(\F_2)$.
\end{exm}

\section{$p$-adic Unitary Dual}
\label{$p$-adic Unitary Dual}

We continue to assume that the base field $k$ is a local field. We observe the relation between the central idempotents arising in the repetitive reduction method in the calculation of the reductive operator algebra $\A$ associated to an infinite dimensional semisimple multiplicity free unitary representation of a profinite group $G$ and the topology of the $p$-adic unitary dual $\check{G}$ of $G$. The case is much simpler when $G$ is an Abelian profinite group. This observation connects the repetitive reduction method to Amice's theory of Fourier transform.

\subsection{Refined Fell Topology}
\label{Refined Fell Topology}

In this subsection, let $G$ be a profinite group. We introduce the notion of the $p$-adic dual $\check{G}$ of $G$. We endow it with a certain topology finer than the ordinary topology. The definition of the $p$-adic unitary dual is easily extended to that of a locally profinite group, but we see only a profinite group in this paper.

\begin{dfn}
We denote by $\check{G}_k$ the set of isomorphism classes of finite dimensional strictly Cartesian irreducible unitary representations of $G$ over $k$ . We endow $\check{G}_k$ with the topology generated by subsets of the following form:
\begin{eqnarray*}
  U_{(V,\rho),r,S,S'} \coloneqq \Set{I \in \check{G}}
  {
  \begin{array}{l}
    {}^{\exists}(W,\pi) \in I, {}^{\exists} \iota_1 \colon S \to W, {}^{\exists} \iota_2 \colon S' \to \m{Hom}_k(W,k), s.t.\\
    \v{s'(k[\rho](a)(s)) - \iota_2(s')(k[\pi](a)(\iota_1(s)))} \leq r, \\
    {}^{\forall} (a,s,s') \in k^{\circ}[G] \times S \times S'
  \end{array}
  }
\end{eqnarray*}
where $(V,\rho)$ is a finite dimensional strictly Cartesian irreducible unitary representation of $G$ over $k$, $r \in ( 0, 1]$, $S \subset V$ is a finite subset, and $S' \subset \m{Hom}_k(V,k)$ is a finite subset.
\end{dfn}

The class $\check{G}_k$ is not a proper class because every finite dimensional unitary representation of $G$ over $k$ is presented as a continuous group homomorphism $G \to \m{GL}_n(k^{\circ})$. We remark that $\check{G}_k$ has enough points because admissible representations of $G$ over $k$ separates points of $G$. In the definition of $U_{(V,\rho),r,S,S'}$, one may naturally replace $k^{\circ}[G]$ by the Iwasawa algebra $k^{\circ}[[G]]$, which is a compact Hausdorff linear topological $k^{\circ}$-algebra. This topology is finer than Fell topology. Such refinement is not useful for the unitary dual over $\C$ because $\C[G]$ is not totally bounded for any Hausdorff locally convex topology.

For every $I \in \check{G}$, take a representative $(V_I,\rho_I) \in I$. We denote by $V$ the completion of $\bigoplus_{I \in \check{G}_k} V_I$ regarded as the orthogonal direct sum of normed $k$-vector spaces. The topology of $\check{G}_k$ is $\m{T}_1$ by Jacobson-Bourbaki density theorem (\cite{Cri04} D 2.2). In other words, the $k$-algebra homomorphism $\prod_{I \in \check{G}_k} \rho_I \colon k[G] \to k \otimes_{k^{\circ}} \prod_{I \in \check{G}_k} \m{B}_k(V_I)(1) \subset \m{B}_k(V)$ is injective. The density theorem guarantees that the image is strongly dense. We denote by $\A$ (resp.\ $A_0$) the closure of the image of $k[G]$ (resp.\ $k^{\circ}[G]$) in the norm topology. For every integral model $O \subset k[G]$, every orthonormal system of central idempotents of the strong closure of the image of gives a partition of $\check{G}_k$ into clopen subsets. In particular, the lift $E_0 \subset A_0 \subset \prod_{I \in \check{G}_k} \m{B}_k(V_I)(1)$ of the set of primitive central idempotent of $\alpha_0 \coloneqq A_0/(A_0 \cap \A(1)) \subset \prod_{I \in \check{G}_k} \m{End}_{\overline{k}} \overline{V}$ given by Corollary \ref{lift 2} as in the proof of Theorem \ref{topological semisimplicity} yields a canonical partition of $\check{G}_k$. This is a generalisation of the block decomposition of the unitary dual of a finite group. Moreover, defining $A_i$ in a inductive way on $i \in \N$ similar with that in \S \ref{Semisimplicity of a Unitary Representation}, we obtain a refinement sequence of partitions of $\check{G}_k$. This repetition of infinitely many refinements corresponds to the repetition of infinitely many reductions in \cite{Mih} as is observed in Proposition \ref{repetition} in the finite dimensional case.

We will observe the most basic example of the structure of this system in \S \ref{Relation to Amice's Theory}. A system of partitions by clopen subsets works well for a non-Archimedean uniform space. Here a uniform space is said to be {\it non-Archimedean} if it admits a fundamental system of entourages consisting of equivalence relations. A profinite space has a canonical Hausdorff non-Archimedean uniform structure, and hence a restriction of the system on a profinite subset helps us to understand it well.

\subsection{Relation to Amice's Theory}
\label{Relation to Amice's Theory}

Let $G$ be the profinite group $\Z_p$. We fix an algebraic closure $C$ of $\Q_p$ and denote by $\C_p$ the completion of $C$ with respect to the norm associated to the unique extension of the valuation of $\Q_p$. There is a one-to-one correspondence between the open unit disc $1 + \C_p^{\circ \circ}$ centred at $1 \in \C_p$ and the set of continuous characters $\Z_p \to \C_p^{\times}$ sending an $a \in 1 + \C_p^{\circ \circ}$ to the character $\chi_a \colon \Z_p \to \C_p^{\times}$ with $\chi_a(1) = a$. The latter set coincides with the set of the isomorphism classes of finite dimensional strictly Cartesian irreducible representation of $\Z_p$ over $C$ by Schur's lemma. Let $k/\Q_p$ be a local field contained in $\C_p$. Then every continuous character of $\Z_p$ on $k$ corresponds to the open unit disc $1 + k^{\circ \circ} \subset 1 + \C_p^{\circ \circ}$. Other finite dimensional strictly Cartesian irreducible unitary representations of $\Z_p$ on $k$ are not absolutely irreducible and correspond to conjugacy classes of $1 + C^{\circ \circ} \subset 1 + \C_p^{\circ \circ}$ with respect to the natural action of the absolute Galois group $\m{Gal}(C/k)$. Thus we obtain a bijective map $\check{G}_k \to (1 + C)/\m{Gal}(C/k)$. Beware that every finite extension $K/k$ is not necessarily strictly Cartesian with respect to the norm induced by the unique extension of the valuation of $k$, and hence one needs to consider an equivalent norm as a $k$-vector space. However, as we remarked at the end of \S \ref{Unitary Representation of a Topological Monoid}, the norm of the underlying Banach $k$-vector space is not an invariant of an isomorphism class of representations unlike the equivalence class of norms. Therefore the norm of $K$ associated to the valuation works well when we calculate the $k$-rational descent $\Z_p \to \m{Aut}_k(K)$ of a $K$-rational character $\Z_p \to K^{\times}$ concretely. We also remark that the fundamental system $(U_{(V,\rho),r,S,S'})_{(V,\rho),r,S,S'}$ of the topology of $\check{G}_k$ does not reflect the norm of $V$ in the parameter.

By the argument above, calculation of the topology of $\check{G}_k$ using the valuation of $\overline{Q}_p$ guarantees that the set-theoretical identification $\check{G}_k \to (1 + C)/\m{Gal}(C/k)$ is a homeomorphism. In particular, the subset $\check{G}(k) \subset \check{G}_k$ of isomorphism classes of continuous characters is homeomorphic to the open unit disc $1 + k^{\circ \circ}$. We compute the restriction of the system of partitions on $\check{G}(k)$ given at the end of \S \ref{Refined Fell Topology}. We follow the notation in \S \ref{Refined Fell Topology}. By Amice's theory of Fourier transform of $\Z_p$, $\A \subset \prod_{a \in 1 + C^{\circ \circ}} \m{B}_k(k(a))(1)$ coincides with a strongly dense $k$-algebra of the Banach $k$-algebra $k[[\Z_p]] \cong_{\m{Ban} k-\m{Alg}} k[[T - 1]]$ of formal power series regarded as a closed $k$-subalgebra $\m{C}_{\m{bd}}(1 + C,\C_p)$ of bounded continuous $\C_p$-valued functions on $1 + C$. The restriction of the supremum norm on $k[[T - 1]]$ coincides with the Gauss norm. The integral model $A_0$ coincides with a strongly dense $k^{\circ}$-subalgebra of $k^{\circ}[[\Z_p]] \cong_{\m{Top} k^{\circ}-\m{Alg}} k^{\circ}[[T - 1]]$. Since $k^{\circ}[[T - 1]]$ is an integral domain, the partition of $\check{G}(k) = 1 + k^{\circ \circ}$ corresponding to $A_0$ is trivial. Take a uniformiser $\varpi \in k^{\circ \circ}$. For each $i \in \N$, the integral model $A_i$ contains $\varpi^{-i}(T - 1)^i \in k[[T - 1]]$, and the corresponding partition is finer than or equal to the partition given by the quotient modulo $\varpi^{i+1} k^{\circ}$, i.e.\ the canonical projection
\begin{eqnarray*}
  1 + k^{\circ \circ} = 1 + \varpi k^{\circ} = \bigsqcup_{\sigma \in \varpi k^{\circ}/\varpi^{i + 1} k^{\circ}} 1 + \sigma \twoheadrightarrow 1 + \varpi k^{\circ}/\varpi^{i + 1}k^{\circ} \subset k^{\circ}/\varpi^{i + 1}k^{\circ}.
\end{eqnarray*}
On the other hand, starting from $k^{\circ}[[T - 1]]$, we define an increasing  filtration $B_0 \subset B_1 \subset B_2 \subset \cdots$ dominating $A_0 \subset A_1 \subset A_2 \subset \cdots$ as
\begin{eqnarray*}
  & & k^{\circ}[[T - 1]] \subset k^{\circ}[[T - 1]] + k^{\circ}\frac{T-1}{\varpi} \subset k^{\circ}[[T - 1]] + k^{\circ}\frac{T-1}{\varpi} + k^{\circ}\frac{(T - 1)^2}{\varpi^2} \\
  & \subset & \cdots \subset k^{\circ} \left[ \middle[ \frac{T - 1}{\varpi} \middle] \right].
\end{eqnarray*}
The system of partitions associated to $B_0 \subset B_1 \subset B_2 \subset \cdots$ is given by the sequence of projections
\begin{eqnarray*}
  1 + \varpi k^{\circ}/\varpi^2 k^{\circ} \twoheadleftarrow 1 + \varpi k^{\circ}/\varpi^3 k^{\circ} \twoheadleftarrow \cdots \twoheadleftarrow 1 + \varpi k^{\circ} = 1 + k^{\circ \circ}.
\end{eqnarray*}
Thus the system of partitions associated to $A_0 \subset A_1 \subset A_2 \subset \cdots$ is the same one.

\vspace{0.4in}
\addcontentsline{toc}{section}{Acknowledgements}
\noindent {\Large \bf Acknowledgements}
\vspace{0.1in}

I would like to express my deepest gratitude to Takeshi Tsuji for his all instructive advices. I am grateful to Peter Schneider and my colleagues in the University of M\"unster for seminars and discussions during my stay. I am grateful to my friends in the University of Tokyo for their frequent helps. I am thankful to my family for their deep affection.

\addcontentsline{toc}{section}{References}

\end{document}